\journalname{}
\begin{document}

\title{Derivative-Free Superiorization With Component-Wise Perturbations
	% Acknowledgments are given in 'thanks'
	\thanks{This project was supported by Research Grant No. 2013003 of the United 	States-Israel Binational Science Foundation (BSF) and by Award No. 	1P20183640-01A1 of the National Cancer Institute (NCI) of the National 	Institutes of Health (NIH).}
}

%\subtitle{Do you have a subtitle?\\ If so, write it here}

\titlerunning{Derivative-Free Superiorization} % if too long for running head

\author{Yair Censor \and Howard Heaton \and Reinhard Schulte}
\authorrunning{Y. Censor, H. Heaton, R. Schulte} % if too long for running head

\institute{Yair Censor \at
           Department of Mathematics,
           University of Haifa,
           Mt. Carmel, Haifa, 3498838, Israel \\
          \email{yair@math.haifa.ac.il}
          \and
          Howard Heaton \at
          Department of Mathematics,
          University of California Los Angeles,
          Los Angeles, CA, 90095, USA
          \and
          Reinhard Schulte \at
          Division of Radiation Research,
          Department of Basic Sciences,
          School of Medicine,
          Loma Linda University,
          Loma Linda, CA 92350, USA
}

\date{Received: date / Accepted: date /
\textbf{Original submission: August 31, 2017. Revised: January 16, 2018. Revised: March 28, 2018.}}
% The correct dates will be entered by the editor

\maketitle

\begin{abstract}
Superiorization reduces, not necessarily minimizes, the value of a target function while seeking constraints-compatibility. This is done by taking a solely feasibility-seeking algorithm, analyzing its perturbations resilience, and proactively perturbing its iterates accordingly to steer them toward a feasible point with reduced value of the target function. When the perturbation steps are computationally efficient, this enables generation of a superior result with essentially the same computational cost as that of the original feasibility-seeking algorithm. In this work, we refine previous formulations of the superiorization method to create a more general framework, enabling target function reduction steps that do not require partial derivatives of the target function. In perturbations that use partial derivatives the step-sizes in the perturbation phase of the superiorization method are chosen independently from the choice of the nonascent directions. This is no longer true when component-wise perturbations are employed. In that case, the step-sizes must be linked to the choice of the nonascent direction in every step. Besides presenting and validating these notions, we give a computational demonstration of superiorization with component-wise perturbations for a problem of computerized tomography image reconstruction.

\keywords{Superiorization \and Derivative-Free \and Component-wise perturbations \and Image reconstruction \and Feasibility-seeking \and Perturbation resilience}
% \PACS{PACS code1 \and PACS code2 \and more}
% \subclass{MSC code1 \and MSC code2 \and more}
\end{abstract}

\section{Introduction}
\label{intro}
In this introduction, we first describe briefly the superiorization methodology and mention some of the previous work on it. Then we describe in general terms the proposed approach of derivative-free component-wise perturbations and the computational demonstration that we report about here.
\paragraph{\textbf{The superiorization methodology}.}
The superiorization methodology (SM) is an algorithmic scheme that can be considered to reside between feasibility-seeking and constrained minimization. Rather than attempting to solve a full-fledged minimization problem, the SM takes a feasibility-seeking algorithm and proactively steers its iterates to find a feasible point that is superior, though not necessarily optimal, with respect to the value of a target function, to the output obtained by the feasibility-seeking algorithm. This approach originates from the discovery that many feasibility-seeking algorithms are perturbation resilient in the sense that, even if certain kinds of changes are made at the end of each iterative step, the algorithms still produce constraints-compatible solutions \cite{butnariu2007stable,censor2010perturbation,davidi2009perturbation,nikazad2012accelerated}.

When the steps to compute perturbations of the iterates of a feasibility-seeking algorithm to reduce the target function value are computationally efficient, a superior result is obtained with essentially the same computational cost as that of the original feasibility-seeking algorithm. Thus, the SM is useful for constrained minimization problems where either an exact algorithm has not been discovered or existing exact algorithms are exceedingly time consuming or require too much computer space for realistically large problems to be solved on commonplace computers. In these cases, the SM enables efficient feasibility-seeking algorithms, which provide constraints-compatible solutions, to be turned into efficient algorithms that will be practically useful from the point of view of reducing the value of the underlying target function.

\paragraph{\textbf{Previous work on superiorization}.}
In the SM, the superiorized version of an iterative feasibility-seeking algorithm consists of two parts. The first part performs perturbations that aim to reduce the value of the target function. The other is a part where the operator for the feasibility-seeking algorithm is applied. As noted in \cite{censor2014projectedsubgrad}, several works have made use of this idea with proposed algorithms for exact constrained minimization (e.g., see \cite{neto2009incremental,neto2011perturbed,defrise2011algorithm,sidky2011constrained,sidky2008image,bian2010evaluation,nurminski2010envelope,combettes2002adaptive,combettes2004image}). However, these approaches are unable to do what is accomplished by the superiorization approach, which is to automatically generate a heuristic constrained optimization algorithm from an iterative feasibility-seeking algorithm. The underlying idea of the SM is quite general and provides application in many areas. The mathematical principles of the SM over general consistent ``problem structures'' with the notion of bounded perturbation resilience were formulated in \cite{censor2010perturbation}. The framework of the SM was extended to the inconsistent case by using the notion of strong perturbation resilience in \cite{censor2014projectedsubgrad,censor2015weakstrong}. In \cite{censor2014projectedsubgrad}, the efficacy of the SM was also demonstrated by comparing it with the performance of the projected subgradient method for constrained minimization problems.

A comprehensive overview of the state of the art and current research on superiorization appears in our continuously updated bibliography Internet page which currently contains 76 items \cite{sup-bib}. Research works in this bibliography include a variety of reports ranging from new applications to new mathematical results of the foundation of superiorization. A special issue entitled: \textquotedblleft Superiorization: Theory and Applications\textquotedblright{} of the journal Inverse Problems, has recently appeared \cite{Sup-Special-Issue-2017}.

\paragraph{\textbf{Derivative-free component-wise perturbations}.}
In the SM, the perturbation part interlaces target function reduction steps into the feasibility-seeking algorithm. Until now, generation of nonascent directions, used for target function reduction steps, was mostly based on theorems such as \cite[Theorem 1]{herman2012superiorization} and its variants such as \cite[Theorem 1]{garduno2014} and \cite[unnumbered Theorem on page 7]{garduno2017}. All these theorems make the assumption on the constructed nonascent direction $g$ whose existence is guaranteed by the theorem that: ``Let $g\in \mathbb{R}^{J}$ satisfy the property: For $1\leq j\leq J$, if the $j$th component $g_{j}$ of $g$ is not zero, then the partial derivative $\frac{\partial\phi}{\partial x_{j}}(x)$ of $\phi$ at $x$ exists and its value is $g_{j}$.'' Thus, $\phi$ must have at least one partial derivative (which is nonzero) at points in the domain of $\phi$. Otherwise, these theorems would apply only to the zero-vector, which is a useless nonascending vector because it renders the SM ineffective.

To summarize this point, the definition of nonascending vectors (see Definition \ref{def:nonascend} below) does not require differentiability but in almost all existing works $\phi$ must obey the condition to have at least one partial derivative (which is nonzero) at points in its domain.
The paper \cite{garduno2011} is a possible exception since no derivatives are used there, but it refers only to the specific $\ell_{1}$-norm and still does not answer the general question of how to implement the SM in cases when the above mentioned theorems do not apply due to total lack of partial derivatives. This question makes sense for cases in which only target function values can be calculated but nothing else about the function, such as, for example, functions that are defined by tables of values. Many such derivative-free objective functions are available in the field of derivative-free optimization, see, e.g., \cite{rios2013}.

In this paper, we offer an approach how to handle target functions $\phi$ which do not obey the above condition of having at least one nonzero partial derivative or for which one is unable to verify it. Our main contribution is to propose the use of \textit{component-wise perturbations} within the SM. When component-wise perturbations are employed, the classical notion of nonascent does not necessarily apply because if a point at a certain distance from a given point, along some coordinate, has a lower target function value it does not guarantee that any other point in the neighborhood of the given point does so. In such a case, the step-sizes of the perturbations must be properly linked to the choice of the nonascent direction in every step, giving rise to a new notion of ``local nonascent'' of the target function, see Definition \ref{def:nonascent} below). These notions of local nonascent and component-wise perturbations have not been used in superiorization until now and they have both theoretical and practical significance. Such formulation of superiorization is logically more generally applicable than previously studied superiorization methods since it allows a wider selection of target function reduction steps and enables the SM to be applied with target functions for which not even one partial derivative is available.

\paragraph{\textbf{Computational demonstration}.}
By considering component-wise perturbations, we generalize previous superiorization schemes to enable use of a wider selection of methods for step-wise reduction of the target function. As a first step in showing that component-wise perturbations in the SM work, we present a new superiorization scheme for reducing total variation (TV) during image reconstruction, i.e., total variation superiorization (TVS). We decided to do component-wise perturbations on iterates to reduce the TV function although it has calculable partial derivatives. This way, we  have something to compare our results with. Surprisingly, we found that, even for this sub-differentiable target function, component-wise perturbations can outperform negative gradient perturbations within the SM. This is not to say or claim that component-wise perturbations always outperform perturbations based on derivative information. On the contrary, it is expected that gradient-based perturbations will, in general, be more efficient in the SM. The true merit of component-wise perturbations is that it opens of the door for derivative-free perturbations in the SM, e.g., by applying it to superiorization of biological merit functions in intensity-modulated radiation therapy (IMRT). Another computational demonstration of derivative-free perturbations in the SM, based on the ideas presented here that we communicated to the authors, appears in \cite[Section 4.3]{gibalipetra2017}.

\paragraph{\textbf{What is in this paper}.}
The remainder of this work is outlined as follows. In Section \ref{sec:The-superiorization-methodology}, we present the mathematical framework of the SM in the context of solving a convex feasibility problem, which is followed by our proposed scheme for TVS with component-wise perturbations in Section \ref{sec:CD for TVS-1}. Then Section \ref{sec:A-computational-example} provides an example of the specific proposed scheme for TVS applied to image reconstruction, juxtaposing our approach with a negative gradient-based approach based on previous works (e.g., \cite{herman2012superiorization,censor2014projectedsubgrad}). Discussion and conclusions are provided, respectively, in Section \ref{sec: Discussion} and Section \ref{sec:Conclusions}.

\section{Superiorization with Local Nonascent \label{sec:The-superiorization-methodology}}

\subsection{The superiorization framework}
\noindent In order to make the paper to some extent self-contained, we briefly review the SM framework as developed in earlier publications, see, e.g., \cite{censor2015weakstrong,censor2010perturbation,censor2014projectedsubgrad,strictfejer,herman2012superiorization}. Given a collection of closed convex subsets $C_{i}\subset\mathbb{R}^{L}$ for $i=1,2,\ldots,m$, in the $L$-dimensional Euclidean space, the convex feasibility problem (CFP) is to find a point $x^{*}\in\cap_{i=1}^{m}C_{i}$. In the superiorization method, one seeks a solution to the CFP that is superior, although not necessarily optimal, with respect to some target function $\phi$. A superior solution is here considered to be a better solution, with respect to the target function value, than that which would have been found by the given feasibility-seeking algorithm without superiorization steps. Suppose that we have a feasibility-seeking algorithmic operator $\mathcal{A}:\mathbb{R}^{L}\rightarrow\mathbb{R}^{L}$ with which we define an iterative process for the solution of a CFP

\begin{equation}
x^{k+1}=\mathcal{A}(x^{k})\ \ \mbox{for all }k\geq0\ \mbox{with arbitrary}\ x^{0}\in\mathbb{R}^{L}.
\end{equation}

This process is called ``the basic algorithm'' and the sequence of iterates it produces can be evaluated using a notion of proximity to the sets of the CFP. Let $\lbrace C_{i}\rbrace_{i=1}^{m}$ be a finite family of closed convex sets and suppose the existence of a nonempty subset $\Lambda\subset\mathbb{R}^{L}$ such that $C_{i}\subset\Lambda$ for all $i=1,2,\ldots,m,$. We denote this CFP by $T$ and associate with it a \textit{proximity function} $\mbox{Prox}_{T}:\Lambda\rightarrow\mathbb{R}_{+}$ that indicates how compatible an $x\in\Lambda$ is with the constraints. Given any positive $\varepsilon$, any point $x\in\Lambda$ for which $\mbox{Prox}_{T}(x)\leq\varepsilon$ is called an \textit{$\varepsilon$-compatible} solution of the CFP. Thus, the basic algorithm can be terminated when the proximity function gives a value less than some positive $\varepsilon$. We define this as the \textit{$\varepsilon$-output of a sequence} of points generated by an iterative algorithmic operator, see \cite[page 5]{censor2010perturbation}.
\begin{definition}
	Given a family of constraints sets $\lbrace C_{i}\rbrace_{i=1}^{m}$ of a CFP $T$, a proximity function
	$\mbox{Prox}_{T}:\Lambda\rightarrow\mathbb{R}_{+}$, a sequence $\{x^{k}\}_{k=0}^{\infty}\subset\Lambda$
	and an
 $\varepsilon>0$, an element $x^{K}$ of the sequence which has the
	properties:
	\begin{enumerate}[i)]
		\item
			$\mbox{Prox}_{T}(x^{K})\leq\varepsilon,$ and
		\item
			$\mbox{Prox}_{T}(x^{k})>\varepsilon$ for all $0\leq k<K$,
	\end{enumerate}
	is called the \textbf{$\varepsilon$-output of the sequence} $\{x^{k}\}_{k=0}^{\infty}$
	with respect to the pair $(T,\mbox{Prox}_{T})$.
\end{definition}
The $\varepsilon$-output $x^{K}$ of a sequence is denoted by $\mathcal{O}\left(T,\varepsilon,\lbrace x^{k}\rbrace_{k=0}^{\infty}\right)$. Such an output may not exist; however, when it does, it is unique. Furthermore, when the sequence $\lbrace x^{k}\rbrace_{k=0}^{\infty}$ is generated by a basic algorithm for solving a CFP, the point $\mathcal{O}\left(T,\varepsilon,\lbrace x^{k}\rbrace_{k=0}^{\infty}\right)$ gives the output of the basic algorithm when the stopping criterion is $\varepsilon$-compatibility.

The following version of the SM, presented in \cite{herman2012superiorization}, is known as strong superiorization. (See \cite{censor2015weakstrong} for a review of strong and weak superiorization.) Here the solution set $C$ of the CFP $T$ may be empty and solving the CFP is then understood to mean finding a point that is within a given proximity of the constraints. The ``superiorized version of a basic algorithm'' is created by taking advantage of the fact that successive iterates of the basic algorithm can, in some instances, be systematically perturbed without losing overall convergence of the iterates. Our problem at hand is stated as follows.
\begin{problem}
	
Let $\lbrace C_{i}\rbrace_{i=1}^{m}$ be a family of closed convex sets of a CFP $T$, $C_{i}\subseteq\Lambda\subseteq\mathbb{R}^{L}$ for all $i$, let $\phi:\mathbb{R}^{L}\rightarrow\mathbb{R}$ be a given target function and let $\mathcal{A}:\Lambda\rightarrow\mathbb{R}^{L}$ be an iterative algorithmic operator defining a basic algorithm for solving the associated CFP. The \textbf{function reduction problem} is to use a superiorized version of the basic algorithm to find a point $x^{*}$ that is $\varepsilon$-compatible with $C$ and has a lesser value of the function $\phi$ than that of another $\varepsilon$-compatible point that would have been obtained by applying the basic algorithm alone.
\end{problem}
Strong perturbation resilience is a property that describes the ability of a basic algorithm to be perturbed and not lose its ability to yield an \textit{$\varepsilon$-compatible} solution of the CFP. This notion was termed ``bounded perturbation resilience'' in \cite[Subsection II.C]{herman2012superiorization} and is defined as follows.
\begin{definition}
	\label{def: strongly-pert-resilient}Assume we are given family of constraints $\lbrace C_{i}\rbrace_{i=1}^{m}$ of a CFP $T$, a proximity function $\mbox{Prox}_{T}$, an algorithmic operator $\mathcal{A}$ and an $x^{0}\in\Lambda$. We use $\{x^{k}\}_{k=0}^{\infty}$ to denote the sequence generated by the basic algorithm when it is initialized at $x^{0}$. The basic algorithm is said to be \textbf{strongly perturbation resilient} iff the following hold:
	
	\begin{enumerate}[i)]
		\item
			 there exist an $\varepsilon>0$ such that the $\varepsilon$-output
			 $\mathcal{O}\left(T,\varepsilon,\{x^{k}\}_{k=0}^{\infty}\right)$
			 exists for every $x^{0}\in\Lambda$; and
		\item			
			for every $\varepsilon>0$, for which the $\varepsilon$-output
			$\mathcal{O}\left(T,\varepsilon,\{x^{k}\}_{k=0}^{\infty}\right)$
			exists for every $x^{0}\in\Lambda$, the $\varepsilon'$-output $\mathcal{O}\left(T,\varepsilon',\{y^{k}\}_{k=0}^{\infty}\right)$
			also exists for every $\varepsilon'>\varepsilon$ and for every sequence
			$\{y^{k}\}_{k=0}^{\infty}$ generated by
			\begin{equation}
			y^{k+1}:=\mathcal{A}\left(y^{k}+\beta_{k}v^{k}\right),\ \ \mbox{for all}\ k\geq0,
			\end{equation}
			where the vector sequence $\{v^{k}\}_{k=0}^{\infty}$ is bounded and
			the scalars $\{\beta_{k}\}_{k=0}^{\infty}$ are such that $\beta_{k}\geq0$
			for all $k\geq0$ and the $\beta_{k}$ are summable, i.e.,
\begin{equation}
\sum_{k=0}^{\infty}\beta_{k}<\infty.
\end{equation}
 
	\end{enumerate}
\end{definition}
Sufficient conditions for strong perturbation resilience of a basic algorithm were proven in \cite[Theorem 1]{herman2012superiorization}.

\subsection{Locally nonascending directions}
The chief motivation to perturb iterates of a basic algorithm by sequences $\{\beta_{k}\}_{k=0}^{\infty}$ and $\{v^{k}\}_{k=0}^{\infty}$ is to reduce the values of the target function $\phi$ by employing directions of nonascent. Below we present the definition of nonascent that is in use in all works on the SM, see \cite[Subsection II.D]{censor2014projectedsubgrad}.
\begin{definition}
	\label{def:nonascend}Given a function $\phi:\mathbb{R}^{L}\rightarrow \mathbb{R}$ and a point $y\in \mathbb{R}^{L}$, we say that a vector $d\in \mathbb{R}^{L}$ is \textbf{nonascending for $\phi$ at $y$} iff $\|d\|\leq1$ ($\|\cdot\|$ denotes the Euclidean norm) and there is a $\delta>0$ such that
	\begin{equation}
	\mbox{for all }\mu\in\left[0,\delta\right]\mbox{ we have }\phi\left(y+\mu d\right)\leq\phi\left(y\right).\label{eq:nonascend}
	\end{equation}
\end{definition}
This definition asserts that the nonascent inequality in (\ref{eq:nonascend}) holds throughout the interval $\left[0,\delta\right].$ Under such circumstances, one can dictate the step-sizes in the perturbation phase of the SM independently of the choice of the nonascent vector. However, in order to employ component-wise perturbations or other perturbations which do not assume the availability of \textbf{any} partial derivative of the target function $\phi$ at $y$, we need to use a different definition of nonascent directions.
We wish to allow the user to look in a neighborhood of the current point $y$ for a point where the target function value is lower without assuming that it is lower in an interval around the current point (this could be the case, e.g., with nonconvex target functions). To do this, the choice of nonascent direction and the perturbation step-size must be linked together to guarantee the reduced target function value. Therefore, we relax the above definition of nonascending vectors so that we may use a wider class of perturbations such as, in particular, component-wise perturbations.
\begin{definition}
	\label{def:nonascent}Given a target function $\phi:\Delta\rightarrow\mathbb{R}$ where $\Delta\subset\mathbb{R}^{L}$, a point $y\in\Delta$, and a positive $\delta\in\mathbb{R}$, we say that $d\in\mathbb{R}^{L}$ is a \textbf{nonascending $\delta$-bound direction for $\phi$ at $y$} if $\|d\|\leq\delta$ and $\phi(y+d)\leq\phi(y)$. The collection of all such vectors is called a \textbf{nonascending $\delta$-ball} and is denoted by $\mathcal{B}_{\delta,\phi}(y)$, i.e.,
	\begin{equation}
	\mathcal{B}_{\delta,\phi}(y):=\{d\in\mathbb{R}^{L}\mid\|d\|\leq\delta,\ \phi(y+d)\leq\phi(y)\}.
	\end{equation}
\end{definition}
	The zero vector is contained in each nonascending $\delta$-ball, i.e., $0\in\mathcal{B}_{\delta,\phi}(y)$ for each $\delta>0$ and $y\in\Delta$. This definition will allow us to use as a nonascent direction any vector $d\in \mathbb{R}^{L}$ at which $\phi(y+d)\leq\phi(y)$ holds, which might be detected by only function value calculations. This will be useful even when $\phi$ is not convex, or if we do component-wise search for a point with reduced target function value. Even functions defined by tabular representations are valid candidates for this nonascending $\delta$-bound directions. We refer to this kind of nonascent as ``local nonascent''.

\subsection{Superiorized version of a basic algorithm with locally nonascending
	directions\label{subsec:Superiorized-version-of}}

The superiorized version of the basic algorithm presented here in Algorithm \ref{alg: sup-version} assumes that we have a summable sequence $\lbrace\eta_{\ell}\rbrace_{\ell=0}^{\infty}$ of positive real numbers generated by $\eta_{\ell}:=a^{\ell}$ where $a\in(0,1)$, called kernel in \cite{censorLinSup}, is user-chosen. This summable sequence is used to perturb iterates with the goal to reduce the value of the target function $\phi$ while maintaining convergence of the iterates to a solution of the original CFP. Each $\eta_{\ell}$ is used to generate a nonascending $\eta_{\ell}$-ball for $\phi$ about iterates produced by applying the basic algorithmic operator $\mathcal{A}$. Points chosen from each of these $\eta_{\ell}$-balls generate sequences $\lbrace v^{k}\rbrace_{k=0}^{\infty}$ and $\lbrace\beta_{k}\rbrace_{k=0}^{\infty}$, corresponding to the sequences in Definition \ref{def: strongly-pert-resilient}. These sequences aim to steer the sequence to a lesser value of $\phi$. This superiorized version of the basic algorithm also depends on a chosen initial point $\bar{y}$ and a sequence $\lbrace N_{k}\rbrace_{k=0}^{\infty}$ of positive integers bounded by some positive integer $N$. With this, the superiorized version of the basic algorithm is presented in Algorithm \ref{alg: sup-version} by its pseudo-code.

\begin{algorithm}
	\caption{Superiorized version using local nonascent of a strongly perturbation resilient basic algorithm \label{alg: sup-version}}
	\normalsize
		$k\leftarrow0$
		\label{step: k=00003D0} \par
		$y^{k}\leftarrow\bar{y}$
		\label{step: y0}\par
		$\ell\leftarrow0$
		\label{step: L0} \par
		\textbf{while }stopping criterion not met
		\label{step: lp_st}\par
		\hspace{0.75cm}$y^{k,0}\leftarrow y^{k}$
		\label{step: yk1} \par
		\hspace{0.75cm}\textbf{for} $n=0\mbox{ to }(N_{k}-1)$ \textbf{do}
		\label{step: for_st}\par
		\hspace{1.5cm}
		Let $v^{k,n}\in\mathcal{B}_{\eta_{\ell},\phi}(y^{k,n})$
		$\ \ \ $(see Definition \ref{def:nonascent})
		\label{step: pick_v}\par
		\hspace{1.5cm}
		$y^{k,n+1}\leftarrow y^{k,n}+v^{k,n}$
		\label{step:8}\par
		\hspace{1.5cm}
		$\ell\leftarrow\ell+1$
		\label{step: L++}\par
		\hspace{0.75cm}
		\textbf{end for}
		\label{step: for_end} \par
		\hspace{0.75cm}
		$y^{k+1}\leftarrow\mathcal{A}\left(y^{k,N_{k}}\right)$
		\label{step: A}	\par
		\hspace{0.75cm}
		$k\leftarrow k+1$
		\par
		\textbf{end while}
		\label{step: lp_end}\par
	\bigskip
\end{algorithm}

The behavior of this superiorized version of a basic algorithm is analyzed here according to how well it achieves feasibility and according to how well it reduces the target function values. For the feasibility question, we have the following lemma, which resembles the arguments in \cite[Subection II.E]{herman2012superiorization} but differs in the use of local nonascending directions.
\begin{lemma}
	\label{lem:epsilon-prime}Assume that a basic algorithm represented 	by the algorithmic operator $\mathcal{A}$ is strongly perturbation resilient and produces an $\varepsilon$-compatible output for some $\varepsilon>0$. If $\lbrace\eta_{\ell}\rbrace_{\ell=0}^{\infty}$ is a summable sequence of positive real numbers, then the superiorized version of the basic algorithm using local nonascent, given by Algorithm \ref{alg: sup-version}, produces an $\varepsilon'$-compatible output for each $\varepsilon'>\varepsilon$.
\end{lemma}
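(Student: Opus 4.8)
The plan is to recognize the sequence $\{y^{k}\}_{k=0}^{\infty}$ produced by Algorithm \ref{alg: sup-version} as a perturbed sequence of exactly the form appearing in part (ii) of Definition \ref{def: strongly-pert-resilient}, so that strong perturbation resilience delivers the $\varepsilon'$-compatible output with no further convergence analysis. All the real work is to collapse the inner \textbf{for}-loop into a single perturbation step and to check that the resulting step-sizes and direction vectors satisfy the boundedness and summability hypotheses of that definition.

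First I would unwind the inner loop. Since $y^{k,0}=y^{k}$ and $y^{k,n+1}=y^{k,n}+v^{k,n}$, a trivial induction gives
\[
y^{k,N_{k}}=y^{k}+\sum_{n=0}^{N_{k}-1}v^{k,n},
\]
so the outer update reads $y^{k+1}=\mathcal{A}\!\left(y^{k}+w^{k}\right)$ with $w^{k}:=\sum_{n=0}^{N_{k}-1}v^{k,n}$. This already has the shape $y^{k+1}=\mathcal{A}(y^{k}+\beta_{k}v^{k})$ once I set $\beta_{k}:=\|w^{k}\|$ and $v^{k}:=w^{k}/\|w^{k}\|$ whenever $w^{k}\neq0$, and $\beta_{k}:=0$, $v^{k}:=0$ otherwise. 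By construction $\|v^{k}\|\leq1$ for every $k$, so $\{v^{k}\}_{k=0}^{\infty}$ is bounded, and $\beta_{k}\geq0$ for every $k$.

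The heart of the argument is the summability of $\{\beta_{k}\}_{k=0}^{\infty}$. I would track the kernel index $\ell$: it starts at $0$ and increases by one on each pass through the inner loop, so during outer step $k$ the values of $\ell$ run over the consecutive block $L_{k},L_{k}+1,\ldots,L_{k+1}-1$, where $L_{k}:=\sum_{j=0}^{k-1}N_{j}$ (with $L_{0}=0$). Because $v^{k,n}\in\mathcal{B}_{\eta_{\ell},\phi}(y^{k,n})$ forces $\|v^{k,n}\|\leq\eta_{\ell}$ by Definition \ref{def:nonascent}, the triangle inequality yields
\[
\beta_{k}=\|w^{k}\|\leq\sum_{n=0}^{N_{k}-1}\|v^{k,n}\|\leq\sum_{\ell=L_{k}}^{L_{k+1}-1}\eta_{\ell}.
\]
Summing over $k$ and merging the disjoint blocks into a single series gives $\sum_{k=0}^{\infty}\beta_{k}\leq\sum_{\ell=0}^{\infty}\eta_{\ell}<\infty$, using the hypothesis that $\{\eta_{\ell}\}_{\ell=0}^{\infty}$ is summable. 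Hence $\{\beta_{k}\}_{k=0}^{\infty}$ is a nonnegative summable sequence.

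With every requirement verified, I would close by invoking strong perturbation resilience. The hypotheses supply an $\varepsilon>0$ for which the unperturbed $\varepsilon$-output exists for every initial point, which is precisely the $\varepsilon$ of part (i) of Definition \ref{def: strongly-pert-resilient}. Part (ii) then guarantees that, for the bounded $\{v^{k}\}$ and the nonnegative summable $\{\beta_{k}\}$ just constructed, the $\varepsilon'$-output $\mathcal{O}\!\left(T,\varepsilon',\{y^{k}\}_{k=0}^{\infty}\right)$ exists for every $\varepsilon'>\varepsilon$, which is the claim. I expect the only delicate point to be bookkeeping rather than conceptual: one must confirm that the index $\ell$ partitions the nonnegative integers into the consecutive blocks $[L_{k},L_{k+1})$ assigned to successive outer steps, so that the double sum over $(k,n)$ reassembles, without overlap or omission, into the summable series $\sum_{\ell}\eta_{\ell}$. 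Notably, the nonascent property $\phi(y^{k,n}+v^{k,n})\leq\phi(y^{k,n})$ built into $\mathcal{B}_{\eta_{\ell},\phi}$ is never used here; only the norm bound $\|v^{k,n}\|\leq\eta_{\ell}$ matters, since this lemma concerns feasibility alone and target-function reduction is analyzed separately.
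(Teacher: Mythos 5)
Your proof is correct, and it follows the same overall strategy as the paper (collapse the inner loop into a single perturbation $y^{k+1}=\mathcal{A}(y^{k}+\beta_{k}v^{k})$ and invoke part (ii) of Definition \ref{def: strongly-pert-resilient}), but with a different decomposition of the aggregate perturbation. The paper sets $\beta_{k}:=\max\{\|v^{k,n}\|\mid 0\leq n\leq N_{k}-1\}$ and $v^{k}:=\frac{1}{\beta_{k}}\sum_{n}v^{k,n}$, so boundedness of $\{v^{k}\}$ comes from $\|v^{k}\|\leq N_{k}\leq N$ (using the uniform bound $N$ on the $N_{k}$), and summability of $\{\beta_{k}\}$ is asserted by saying $\{\beta_{k}\}$ is a subsequence of $\{\eta_{\ell}\}$ --- a claim that is literally only ``dominated by terms drawn from disjoint blocks of $\{\eta_{\ell}\}$,'' and whose cleanest justification is exactly the block bookkeeping you spell out. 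Your choice $\beta_{k}:=\bigl\|\sum_{n}v^{k,n}\bigr\|$ with $v^{k}$ the normalized sum makes $\|v^{k}\|\leq 1$ trivially (no appeal to $N$ needed) and yields summability directly from $\sum_{k}\beta_{k}\leq\sum_{\ell}\eta_{\ell}<\infty$ via the disjoint consecutive blocks $[L_{k},L_{k+1})$, without any implicit monotonicity of $\{\eta_{\ell}\}$. Your closing observation that only the norm bound $\|v^{k,n}\|\leq\eta_{\ell}$, and not the nonascent inequality, is used here is also accurate and matches the paper's separation of the feasibility question from the target-function-reduction question (which the paper treats in the subsequent theorem).
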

\begin{proof}
	We have to show that if $\mathcal{O}\left(T,\varepsilon,\{y^{k}\}_{k=0}^{\infty}\right)$  is defined for each $y^{0}\in\mathbb{R}^{L}$, then for any $\varepsilon'>\varepsilon$, Algorithm \ref{alg: sup-version} produces an $\varepsilon'$-compatible output. The strong perturbation resilience of $\mathcal{A}$ guarantees this if there exist a summable sequence $\{\beta_{k}\}_{k=0}^{\infty}$ of nonnegative real numbers and a bounded sequence $\{v^{k}\}_{k=0}^{\infty}$ of vectors in $\mathbb{R}^{L}$ such that 	
	\begin{equation}
	y^{k+1}=\mathcal{A}(y^{k}+\beta_{k}v^{k})\ \ \ \forall\ k\geq0.
	\end{equation}
	Indeed, define 	
	\begin{equation}
	\beta_{k}:=\max\{\|v^{k,n}\|\mid0\leq n\leq N_{k}-1\}\label{eq: beta_k}
	\end{equation}
	and	
	\begin{equation}
	v^{k}:=\begin{cases}
	\sum_{n=0}^{N_{k}-1}\dfrac{1}{\beta_{k}}v^{k,n}, & \mbox{if }\beta_{k}>0,\\
	0, & \mbox{otherwise.}
	\end{cases}\label{eq: v_k}
	\end{equation}
	Since $y^{k,0}=y^{k}$, it follows from Steps \ref{step: yk1}-\ref{step: for_end} that these definitions result in $y^{k,N_{k}}=y^{k}+\beta_{k}v^{k}.$
	From Step \ref{step: pick_v} it follows that $\{\beta_{k}\}_{k=0}^{\infty}$ 	is a subsequence of $\{\eta_{\ell}\}_{\ell=0}^{\infty}$ and, hence, it is a summable sequence of nonnegative real numbers. Because the sequence $\{\eta_{\ell}\}_{\ell=0}^{\infty}$ is summable and each $\|v^{k,n}\|\leq\eta_{\ell}$, for appropriate $\ell$, it follows that $\{v^{k}\}_{k=0}^{\infty}$ is bounded. Hence the superiorized version using local nonascent, given by Algorithm \ref{alg: sup-version}, produces an $\varepsilon'$-compatible output for each $\varepsilon'>\varepsilon$. \qed
\end{proof}
Algorithm \ref{alg: sup-version} works as follows. Initially, the iteration number $k$ is set to 0 and $y^{0}$ is set to its initial value $\bar{y}$. The index $\ell$ for picking the next term of the sequence $\{\eta_{\ell}\}_{\ell=0}^{\infty}$ is initialized to $\ell=0$ and is repeatedly incremented by Step \ref{step: L++}. Steps \ref{step: lp_st}-\ref{step: lp_end} do a full iterative step, from $y^{k}$ to $y^{k+1},$ and repetitions of these steps generate the sequence $\{y^{k}\}_{k=0}^{\infty}$. During one iterative step, there is one application of the operator $\mathcal{A}$, in Step \ref{step: A}, but there are $N_{k}$ steering steps aimed at reducing the value of $\phi$; the latter are done by Steps \ref{step: for_st}-\ref{step: for_end}. These steps produce a sequence of inner loop points $y^{k,n}$, where $0\leq n\leq N_{k}$ with $y^{k,0}=y^{k}$ and $y^{k,n}\in\mathbb{R}^{L}$.

To our knowledge, except for \cite{strictfejer}, no proof has been published to date asserting the precise behavior of a superiorized version of an algorithm regarding  the target function values. However, here we claim that Algorithm \ref{alg: sup-version} systematically reduces target function values within the inner loops of perturbations, similarly to the analysis in \cite[Subection II.E]{herman2012superiorization}.
\begin{theorem}
	Under the conditions of Lemma \ref{lem:epsilon-prime}, sequences of inner loop points $y^{k,n}$, generated by Algorithm \ref{alg: sup-version}, where $0\leq n\leq N_{k}$ with $y^{k,0}=y^{k}$ and $y^{k,n}\in\mathbb{R}^{L}$, have the property that for all $k=0,1,2,\ldots,$ and all $0\leq n\leq N_{k}$,
\end{theorem}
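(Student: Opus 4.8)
The plan is to establish the asserted inequality by a short finite induction on the inner-loop index $n$, drawing entirely on the defining membership condition of the nonascending $\eta_{\ell}$-ball in Definition \ref{def:nonascent}. Concretely, I expect the property being claimed to be $\phi(y^{k,n})\leq\phi(y^{k,0})$ for all $0\leq n\leq N_{k}$ (equivalently, that within each outer iteration $k$ the inner-loop iterates produce a nonincreasing sequence of target-function values), and I would prove it by first extracting a single-step reduction and then telescoping.

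First I would isolate the one-step inequality. In Step \ref{step: pick_v} the perturbation vector $v^{k,n}$ is selected from $\mathcal{B}_{\eta_{\ell},\phi}(y^{k,n})$, so Definition \ref{def:nonascent} gives directly both $\|v^{k,n}\|\leq\eta_{\ell}$ and, what matters here, $\phi(y^{k,n}+v^{k,n})\leq\phi(y^{k,n})$. Since Step \ref{step:8} assigns $y^{k,n+1}\leftarrow y^{k,n}+v^{k,n}$, substitution yields
\[
\phi(y^{k,n+1})\leq\phi(y^{k,n})\qquad\text{for every }0\leq n\leq N_{k}-1.
\]

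Then I would chain these single-step bounds. Because Step \ref{step: yk1} fixes $y^{k,0}=y^{k}$ at the start of the inner loop, a finite induction on $n$ (equivalently, a telescoping of the one-step estimates just obtained) produces
\[
\phi(y^{k,n})\leq\phi(y^{k,n-1})\leq\cdots\leq\phi(y^{k,1})\leq\phi(y^{k,0})
\]
for each $0\leq n\leq N_{k}$, which is precisely the claimed property.

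The argument carries no genuine obstacle: all of the content is supplied by the local-nonascent membership requirement, and formulating Definition \ref{def:nonascent} in exactly this coupled step-size-and-direction form is what makes the monotonicity hold without any appeal to differentiability or convexity of $\phi$. The only point I would flag explicitly is the scope of the claim. The guaranteed reduction pertains to the inner-loop points only; the application of $\mathcal{A}$ in Step \ref{step: A} passing from $y^{k,N_{k}}$ to $y^{k+1}$ is a feasibility-seeking step and need not reduce $\phi$, so the statement deliberately restricts to $y^{k,n}$ and makes no comparison between $\phi(y^{k+1})$ and $\phi(y^{k})$ across outer iterations.
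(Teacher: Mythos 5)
Your proposal is correct and follows essentially the same route as the paper's own proof: a finite induction on the inner-loop index $n$, using the membership condition $\phi(y^{k,n}+v^{k,n})\leq\phi(y^{k,n})$ from Definition \ref{def:nonascent} together with $y^{k,0}=y^{k}$ to chain the one-step inequalities. You also correctly identified the displayed claim $\phi(y^{k,n})\leq\phi(y^{k})$ and its restriction to the inner loop, so nothing is missing.
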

\begin{equation}
\phi(y^{k,n})\leq\phi(y^{k}).
\end{equation}

\begin{proof}
	The proof is by induction. Fix an integer $k\geq0.$ For $n=0$ we have $y^{k,0}=y^{k}$ and so $\phi(y^{k,0})=\phi(y^{k})$. Now assume, for any $0\leq n<N_{k},$ that $\phi(y^{k,n})\leq\phi(y^{k}).$ Next	we show that Steps \ref{step: for_st}-\ref{step: for_end} lead from $y^{k,n}$ to $y^{k,n+1}$ that gives $\phi(y^{k,n+1})\leq\phi(y^{k})$. The vector $v^{n,k}$ in Step \ref{step: pick_v} is chosen, by Definition \ref{def:nonascent}, such that $\phi(y^{k,n}+v^{k,n})\leq\phi(y^{k,n})$. But, in Step \ref{step:8}, $y^{k,n+1}=y^{k,n}+v^{k,n}$ and, by the induction hypothesis, $\phi(y^{k,n})\leq\phi(y^{k})$. Thus,	
	\begin{equation}
	\phi(y^{k,n+1})=\phi(y^{k,n}+v^{k,n})\leq\phi(y^{k,n})\leq\phi(y^{k}).
	\end{equation}
	Therefore, we conclude that $\phi\left(y^{k,n}\right)\leq\phi\left(y^{k}\right)$
	for all $0\leq n\leq N_{k}$.\qed
\end{proof}
After going through the inner loop $N_{k}$ times, Step \ref{step: A} is executed to produce $y^{k+1}$. Then, increasing the value of $k$ allows us to move to the next iterative step. Infinitely many repetitions of such steps produces the sequence of points $\{y^{k}\}_{k=0}^{\infty}$. Due to the repeated steering, by Steps \ref{step: for_st}-\ref{step: for_end}, toward reducing the value of the target function $\phi$, we can expect that the output of the superiorized version using local nonascent will be superior, from the point of view of $\phi$, to the output that would have been obtained, with everything else being equal by the basic algorithm. This ``expected'' outcome has been observed in all published experimental reports to date, see, e.g., the many papers mentioned in \cite{sup-bib}, but has not been yet mathematically proven. On this theoretical side, there is, as far as we know, only the result of \cite{strictfejer}.

\section{Total Variation Superiorization with Component-Wise Perturbations\label{sec:CD for TVS-1}}

\subsection{The application, the approach, and the numerical demonstration}

Total variation (TV) superiorization (TVS) has been used before in image reconstruction from projections with very good experimental performance, as can be seen in several of the papers posted on \cite{sup-bib}. Since TV has everywhere a subgradient, all previous work on TVS used negative subgradients of TV as nonascent directions for the perturbations in the superiorized version of the basic feasibility-seeking algorithm.

In situations of superiorization in the SM with respect to other target functions for which there is no guarantee to have at least one non-zero partial derivative at points in the domain of the function, the notion of $\delta$-bound nonascending perturbations, developed above, plays an important role. As mentioned before, such situations will arise when attempting to apply the SM to target functions $\phi$ which are not convex, or to functions defined by tabular representations.

The purpose of the numerical demonstration presented in the sequel is to show that superiorization with component-wise perturbations works at all. We do not present a full-fledged methodological numerical investigation and, therefore, the findings do not allow to draw general conclusions yet. It would be interesting to see future results when using a larger sample of datasets (e.g., randomized variations of the phantom) and get more statistical information about how superiorization with component-wise perturbations fares in comparison with gradient-based perturbations in the SM.

To explore the numerical behavior of the SM with component-wise perturbations, we wish to have something to compare it with. Therefore, we apply it to TVS without resorting to calculations of its subgradients and compare the results with those obtained from TVS with negative subgradients as directions of nonasecent.

Our computational work surprisingly shows that even in this case in which the target function lends itself to gradient or subgradient calculations, such as TV, component-wise perturbations may be advantageous. Obviously, we do not make any general claim to this effect since more work is needed to investigate the numerical behavior of component-wise perturbations in the SM.

\subsection{Image representation}

Series expansion methods in image reconstruction from projections, see, e.g., \cite{herman2009book}, assume that a two-dimensional (2D) image can be represented using a linear combination of a set of fixed basis functions. Let $f:\mathbb{R}^{2}\rightarrow\mathbb{R}$ be a 2D image. Then a digital approximation of $f$ is defined at each point $r\in\mathbb{R}^{2}$ by
\begin{equation}
f(r)\approx\sum_{\ell=1}^{L}u_{\ell}\cdot b_{\ell}(r),\label{eq: image-series-rep}
\end{equation}
where $b_{\ell}$ denotes the $\ell$-th basis function of some finite set $\lbrace b_{\ell}\rbrace_{\ell=1}^{L}$ of appropriately chosen basis functions and each component $u_{\ell}$ of the vector $u\in\mathbb{R}^{L}$ gives a weighting factor for the contribution of $b_{\ell}$. For a given set of basis functions, the image estimate in (\ref{eq: image-series-rep}) is uniquely determined by $u$, which is called the \textit{image vector}.

Pixels form the set of basis functions used in this work. These are picture elements that cover the entire image. Each pixel has the support of a square and is defined by
\begin{equation}
b_{\ell}(r):=\left\{ \begin{array}{cl}
1, & \mbox{if \ensuremath{r} is inside the \ensuremath{\ell}-th pixel},\\
0, & \mbox{otherwise.}
\end{array}\right.
\end{equation}
When using pixel basis functions, each $u_{\ell}$ in (\ref{eq: image-series-rep}) gives the average value of the image $f$ inside the $\ell$-th pixel. Hereafter, we denote the image approximation in (\ref{eq: image-series-rep}) simply by $u$ and use double-indexing $u_{i,j}$ for$\:i,j=1,2,\ldots,J,$ to denote the value of the digital approximation in (\ref{eq: image-series-rep}) at the pixel location $(i,j)$ where the support of $u$ is composed of $L=J^{2}$ pixels.

\subsection{Total variation in imaging\label{subsec:TV}}

The introduction of noise in reconstructed images is inevitable in practice. However, as introduced in \cite{rudin1992nonlinear}, image restoration based on total variation has proven quite effective for a wide range of applications, including inpainting \cite{shen2002mathematical}, super-resolution \cite{Marquina2008image}, image restoration \cite{Beck2009FastTVDenoising,wang2008new,Chan05recentdevelopments}, and medical imaging \cite{han2013image,Needell2013StableImage,sidky2008image}. TV is formally defined as follows.
\begin{definition}
	Let $u:\mathbb{R}^{2}\rightarrow\mathbb{R}$ be a smooth image. Then 	the\textbf{ total variation} of $u$ is defined by 	
	\begin{equation}
	\mbox{TV}(u):=\int\|\nabla u\|,\label{eq: TV-iso}
	\end{equation}
	where $\nabla u$ denotes the gradient\footnote{This is not to be confused with the notion of gradient of a function. The meaning of $\nabla$ will always be understood according to what it operates on.} of $u,$ so that $\|\nabla u\|:=\sqrt{(D_{x}u)^{2}+(D_{y}u)^{2}}$ where $D_{x}$ and $D_{y}$ denote the horizontal and vertical partial derivative operators.
\end{definition}
In the discrete case, the integral in (\ref{eq: TV-iso}) is replaced by a summation over the extent of the pixels of the digital approximation
$u$, so that
\begin{equation}
\mbox{TV}(u)=\sum_{i,j}\sqrt{\left(D_{x}u_{i,j}\right)^{2}+\left(D_{y}u_{i,j}\right)^{2}},
\end{equation}
where the discrete differential operators are given by
\begin{equation}
D_{x}u_{i,j}:=\begin{cases}
u_{i+1,j}-u_{i,j}, & \mbox{if }1\leq i<J,\\
0, & \mbox{otherwise,}
\end{cases}\label{eq: D_x-discrete}
\end{equation}
and
\begin{equation}
D_{y}u_{i,j}:=\begin{cases}
u_{i,j+1}-u_{i,j}, & \mbox{if }1\leq j<J,\\
0, & \mbox{otherwise.}
\end{cases}\label{eq: D_y-discrete}
\end{equation}

\subsection{TVS with component-wise perturbations\label{subsec:TVS-with-Coordinate-wise}}

For TVS, we propose a new algorithm inspired by the framework presented in the previous sections. Our algorithm computes each nonascent vector $v^{k,n}$ for the target function $\phi=\mbox{TV}$, in Step \ref{step: pick_v} of Algorithm \ref{alg: sup-version}, in a specific manner applicable to TVS. Our approach proposes reducing TV by smoothing out local extrema, i.e., reducing their relative magnitude, through a type of averaging. This is accomplished using a first order approximation of nearby points in an image $u$. Recall that for points $r,h\in\mathbb{R}^{2}$
\begin{equation}
u(r)\approx u(r+h)-\left\langle \nabla u(r+h),h\right\rangle
\end{equation}
gives a first-order approximation where $\left\langle \cdot,\cdot\right\rangle $ denotes the scalar product. Smoothing can be accomplished by using averaged first-order approximations in opposite directions from each point, i.e., by the approximation of the average given by
\begin{equation}
\dfrac{1}{2}\left[u(r+h)+u(r-h)\right]\approx u(r)+\dfrac{1}{2}\left[\left\langle \nabla u(r+h),h\right\rangle -\left\langle \nabla u(r-h),h\right\rangle \right].
\end{equation}
The corresponding perturbation of the image $u$, denoted by $w$, is defined, for each $r\in\mathbb{R}^{2},$ so that
\begin{equation}
w(r)=\dfrac{1}{2}\left(\left\langle \nabla u(r+h),h\right\rangle -\left\langle \nabla u(r-h),h\right\rangle \right).\label{eq: perturbation-continuous}
\end{equation}
By construction, this perturbation $w(r)$ yields an image $u(r)+w(r)$ with lower TV than $u(r)$ for sufficiently small $h$ since, for each $r\in\mathbb{R}^{2}$, $u(r)+w(r)$ will have a value between the values of local extrema of $u$ in proximity of $r$. In the actual computations we compute Step \ref{step: pick_v} of Algorithm \ref{alg: sup-version} in a way that assigns a zero perturbation if the vector obtained was ascending, see equation (\ref{eq: pert-w}) below. In the discrete case, when $h$ is taken to be a unit vector along the $x$ axis, the value $w_{i,j}$ of the perturbation $w$ at the pixel location $(i,j)$ is defined so that
\begin{equation}
w_{i,j}=\dfrac{1}{2}\left(D_{x}u_{i,j}-D_{x}u_{i-1,j}\right),
\end{equation}
and when $h$ is taken to be a unit vector along the $y$ axis
\begin{equation}
w_{i,j}=\dfrac{1}{2}\left(D_{y}u_{i,j}-D_{y}u_{i,j-1}\right).
\end{equation}
Note that, due to the indexing of the discrete differential operators in (\ref{eq: D_x-discrete}) and (\ref{eq: D_y-discrete}), the derivatives $D_{x}u_{i,j}$ and $D_{y}u_{i,j}$ are used above instead of $D_{x}u_{i+1,j}$ and $D_{y}u_{i,j+1}$, respectively.

In order to use $w$ to perturb an image $u$ within the SM, we must bound the magnitude of $w$. This can be done by bounding the contribution of each derivative used in the definition of $w$ to compute a perturbation $w^{*}$, which is thereby bounded component-wise. For $\theta>0$, define the operator $\lfloor\cdot\rfloor_{\theta}:\mathbb{R}\rightarrow\mathbb{R}$ by
\begin{equation}
\lfloor\alpha\rfloor_{\theta}:=\min\left\{ \theta,|\alpha|\right\} \cdot\mbox{sgn}(\alpha),
\end{equation}
where $|\cdot|$ denotes absolute value and $\mbox{sgn}$ denotes the signum function. When $h$ is a unit vector along the $x$ axis and $\theta>0$ is given, the value $w_{i,j}^{*}$ of the perturbation vector $w^{*}$ at the pixel location $(i,j)$ is defined to be
\begin{equation}
w_{i,j}^{*}:=\frac{1}{2}\left(\lfloor D_{x}u_{i,j}\rfloor_{\theta}-\lfloor D_{x}u_{i-1,j}\rfloor_{\theta}\right).\label{eq: pert-discrete-x}
\end{equation}
Otherwise, when $h$ gives a unit vector along the $y$ axis we let
\begin{equation}
w_{i,j}^{*}:=\frac{1}{2}\left(\lfloor D_{x}u_{i,j}\rfloor_{\theta}-\lfloor D_{x}u_{i,j-1}\rfloor_{\theta}\right)\label{eq: pert-discrete-y}
\end{equation}
This formulation of $w^{*}$ enables smoothing an image with control of the magnitude of the perturbation by bounding it component-wise. Each $v^{k,n}$ in Step \ref{step: pick_v} of Algorithm \ref{alg: sup-version} can use $w^{*}$ with $\theta=\eta_{\ell}/\sqrt{L}$. We formalize this with the following proposition.
\begin{proposition}
	Let $\delta>0$ be given. If we define $\theta:=\delta/\sqrt{L}$, then the vector $w^{*}$obtained by (\ref{eq: pert-discrete-x}) and/or (\ref{eq: pert-discrete-y}) is such that $\|w^{*}\|\leq\delta$.
\end{proposition}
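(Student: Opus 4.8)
The plan is to reduce the Euclidean-norm bound on $w^{*}$ to a uniform per-component bound, and then sum over the $L=J^{2}$ pixels. The key feature to exploit is that the truncation operator $\lfloor\cdot\rfloor_{\theta}$ caps magnitudes at $\theta$, so each of the two truncated differences that make up a component of $w^{*}$ in (\ref{eq: pert-discrete-x}) and (\ref{eq: pert-discrete-y}) is itself controlled by $\theta$. Since both the $x$-axis and the $y$-axis formulas have the same structure (a difference of two truncated derivatives, scaled by $1/2$), the argument will be insensitive to the ``and/or'' in the statement.

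First I would record the elementary bound $|\lfloor\alpha\rfloor_{\theta}|\leq\theta$ for every $\alpha\in\mathbb{R}$. This follows at once from the definition $\lfloor\alpha\rfloor_{\theta}=\min\{\theta,|\alpha|\}\cdot\mbox{sgn}(\alpha)$, because $\min\{\theta,|\alpha|\}\leq\theta$ while $|\mbox{sgn}(\alpha)|\leq1$ (with the degenerate case $\alpha=0$ giving $\lfloor 0\rfloor_{\theta}=0$). Applying the triangle inequality to either component formula then yields, for each pixel location $(i,j)$,
\begin{equation}
|w_{i,j}^{*}|\leq\tfrac{1}{2}\left(|\lfloor\cdot\rfloor_{\theta}|+|\lfloor\cdot\rfloor_{\theta}|\right)\leq\tfrac{1}{2}(\theta+\theta)=\theta.
\end{equation}
Thus every component of $w^{*}$ has absolute value at most $\theta$, regardless of which axis direction was used to define it.

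Finally, I would pass from the per-component bound to the norm bound by summing the squared components over all $L$ pixels:
\begin{equation}
\|w^{*}\|=\sqrt{\sum_{i,j}(w_{i,j}^{*})^{2}}\leq\sqrt{L\,\theta^{2}}=\sqrt{L}\,\theta,
\end{equation}
and then substitute $\theta=\delta/\sqrt{L}$ to obtain $\|w^{*}\|\leq\sqrt{L}\cdot(\delta/\sqrt{L})=\delta$, as claimed. The computation is short and essentially mechanical; the only point meriting any care is confirming that the operator bound $|\lfloor\alpha\rfloor_{\theta}|\leq\theta$ is genuinely uniform over all real $\alpha$ (including $\alpha=0$) and that it applies identically to the $x$ and $y$ formulas, so that the count of exactly $L=J^{2}$ bounded components is valid. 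I expect no real obstacle beyond this bookkeeping.
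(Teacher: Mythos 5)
Your proof is correct and follows essentially the same route as the paper's: establish the per-component bound $|w_{i,j}^{*}|\leq\theta$ and then sum the $L$ squared components to get $\|w^{*}\|\leq\theta\sqrt{L}=\delta$. Your justification of the component bound via $|\lfloor\alpha\rfloor_{\theta}|\leq\theta$ and the triangle inequality is in fact cleaner and more explicit than the paper's informal sign-matching argument, but it is the same idea.
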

\begin{proof}
	Let $\delta>0$ be given. Whether $h$ is a unit vector along either the $x$ or $y$ axis, the above definitions allow each $w_{i,j}^{*}$ to equal $w_{i,j}$ while $|w_{i,j}|\leq\theta$. Otherwise, the signs of the terms composing $w_{i,j}^{*}$ will match those of $w_{i,j}$, but with a reduced magnitude so that the relation $|w_{i,j}^{*}|\leq\theta$ always holds. Arranging the pixel values lexicographically to write $w^{*}$ as an image vector, it then follows that
	\begin{equation}
	\|w^{*}\|\leq\underbrace{\sqrt{\theta^{2}+\cdots+\theta^{2}}}_{L\mbox{ terms}}=\theta\cdot\sqrt{L}.
	\end{equation}
	Thus, by choice of $\theta$, $\|w^{*}\|\leq\delta$.\qed
\end{proof}
The new concept of the perturbation vector for TVS defined above allows us to use $w^{*}$ as a $\delta$-bound nonascending vector for $\phi$ at $u$ provided that $\phi(u+w^{*})\leq\phi(u)$ holds. To ensure this, we compute $v^{k,n}$ in Step \ref{step: pick_v} of Algorithm \ref{alg: sup-version} by choosing
\begin{equation}
v^{k,n}:=\begin{cases}
w^{*}, & \mbox{if }\phi(y^{k,n}+w^{*})\leq\phi(y^{k,n}),\\
0, & \mbox{otherwise.}
\end{cases}\label{eq: pert-w}
\end{equation}
In (\ref{eq: pert-w}), we compute $w^{*}$ using either (\ref{eq: pert-discrete-x}) or (\ref{eq: pert-discrete-y}).

\section{TVS with Component-Wise Perturbations Applied to Image Reconstruction\label{sec:A-computational-example}}

\subsection{Image reconstruction problem}

The discretized model in the series expansion approach to the image reconstruction problem of computerized tomography (CT) is described as follows. Some physical entities (e.g., x-rays) are systematically passed through an object to be scanned. Measurements are made of some physical property of these entities (e.g., attenuation). The goal of image reconstructions is to use measurements to reconstruct an image that represents the object scanned as faithfully as possible. Discretizing the object into pixels or voxels and the outer x-rays field into rays, the modeling of CT yields a matrix $A$ called the \textit{system 	matrix} and a corresponding \textit{measurement vector }$y$. For a complete description see, e.g., \cite{herman2009book}. Each measurement $y_{m}$, which is the $m$-th component of the vector $y$, can be approximated by
\begin{equation}
y_{m}\approx\sum_{\ell=1}^{L}u_{\ell}\cdot a_{\ell}^{m},\label{eq: series-def}
\end{equation}
where $a_{\ell}^{m}$ denotes entry of $A$ in the $m$-th row and $\ell$-th column and each $u_{\ell}$ represents the contribution of the $\ell$-th pixel basis function $b_{\ell}$. One commonly used approach to solving the system $Au=y$ is to use a feasibility-seeking projection method, such as an algebraic reconstruction technique (ART) described in the next subsection.

\subsection{Algebraic Reconstruction Techniques}

The basic algorithmic operator $\mathcal{A}$ that we used to solve the image reconstruction problem is the algebraic reconstruction technique (ART) (see \cite[Chapter 11]{herman2009book}). For each row $m$ of the system matrix, denoted by $a^{m}$, we define the operator $T_{m}:\mathbb{R}^{L}\rightarrow\mathbb{R}^{L}$ by
\begin{equation}
T_{m}(u):=u+\lambda\dfrac{y_{m}-\left\langle a^{m},u\right\rangle }{\|a^{m}\|^{2}}a^{m},
\end{equation}
where $\lambda\in(0,2)$ is a relaxation parameter. The basic algorithmic operator $\mathcal{A}:\mathbb{R}^{L}\rightarrow\mathbb{R}^{L}$ is then given by
\begin{equation}
\mathcal{A}(u):=T_{M}\cdots T_{2}T_{1}(u)
\end{equation}
where $M$ denotes the number of rows in the system matrix. From previous works (e.g., \cite{censor2014projectedsubgrad}), it is known that the basic algorithmic operator $\mathcal{A}$ for ART, defined as above, is strongly perturbation resilient.

\subsection{Target function reducing steps\label{subsec:Target-Function-Reducing}}

Two methods were compared in this work: the new component-wise perturbation method for TVS (CW-TVS) and a method using negative gradients for TVS (NG-TVS) based on \cite[pp. 737--738]{censor2014projectedsubgrad}. The target function used in this example was total variation $\phi=\mbox{TV}$. During each perturbation step of the first method, an iterate $y^{k,n}$ was perturbed component-wise using (\ref{eq: pert-discrete-x}) and then (\ref{eq: pert-discrete-y}) using the perturbation size $\frac{1}{2}\eta_{\ell}$ for each. This process was repeated for each perturbation vector $v^{k,n}$ to reduce the value of the target function. The second method that we used was the TVS algorithm called ``Superiorized Version of the Basic Algorithm'' on pages 737--738 in \cite{censor2014projectedsubgrad}. That is, we set
\begin{equation}
v^{k,n}:=-\eta_{\ell}\cdot\dfrac{\nabla\phi\left(y^{k,n}\right)}{\left\Vert \nabla\phi\left(y^{k,n}\right)\right\Vert }\:\:\mbox{ if }\:\:\mbox{TV}\left(y^{k,n}-\eta_{\ell}\cdot\dfrac{\nabla\phi\left(y^{k,n}\right)}{\left\Vert \nabla\phi\left(y^{k,n}\right)\right\Vert }\right)\leq\mbox{TV}\left(y^{k,n}\right),\label{eq: neg-grad-method}
\end{equation}
and if this statement did not hold, then $\ell$ was incremented until the above statement did hold. The computation of the gradient $\nabla\phi\left(y\right)$ of the target function $\phi$ has up to three fraction terms. In the denominator of each fraction is a term of the form
\begin{equation}
\sqrt{\left(D_{x}(y)_{i,j}\right)^{2}+\left(D_{y}(y)_{i,j}\right)^{2}}\label{eq: denom-term}
\end{equation}
at pixel location $(i,j)$. To maintain numerical stability when the expression (\ref{eq: denom-term}) becomes small, we replace the denominator term (\ref{eq: denom-term}) with
\begin{equation}
\gamma_{tol}+\sqrt{\left(D_{x}(y)_{i,j}\right)^{2}+\left(D_{y}(y)_{i,j}\right)^{2}},
\end{equation}
where $\gamma_{tol}:=10^{-12}$.

\subsection{Computational details}

The computations reported here were done with Matlab \cite{MATLAB} on a single machine using a single CPU, a quad core Intel i5-3317U at 1.70 GHz with 4.00 GB RAM. The AIR tools package \cite{Hansen2012airtools} was used to generate the simulated data. All reconstructions were done in the Matlab environment. Differences in reported reconstruction times are, thus, not due to different algorithms implemented in different environments.

Figure \ref{fig: Shepp-Logan-phantom} shows the phantom used in our study, which is a 256 $\times$ 256 digitized version of the Shepp-Logan phantom whose calculated TV is 1461. We used this phantom as created by the AIR tools package \cite{Hansen2012airtools}. It is represented by an image vector with 65,536 components. The values of the components in the Shepp-Logan phantom range from 0 to 1. For our displays, we use the range {[}0,1{]}. Any value below 0 is shown as black and any value above 1 is shown as white and a linear mapping is used in-between. This display window was used for all images presented here.

\begin{figure}
	\centering
	\includegraphics[width=2.5in]{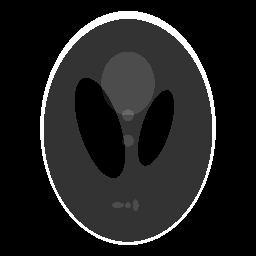}
	\caption{Original $256\times256$ pixel Shepp-Logan phantom with $\mbox{TV}=1461$ \label{fig: Shepp-Logan-phantom}}
\end{figure}

Two sets of experiments were conducted. One had 2\% Gaussian noise added to the measurement data and the other was noise-free. Projection data were collected by approximating line integrals through the digitized phantom in Figure \ref{fig: Shepp-Logan-phantom} using a fan beam, which consists of lines diverging from a single source point.The fan beam was rotated in 15 degree increments about the phantom (24 positions in total) for the noise-free data and in 9 degree increments for the noisy data (40 positions in total). Each line integral gives rise to a linear equation. The phantom itself lies in the intersection of all the solutions of the linear equations associated with these lines. The total number of linear equations generated was 12,288 for the noise-free data and 20,480 for the noisy data, thereby creating an underdetermined problem since there were $256^{2}=65,536$ unknowns. The stopping criterion used for each image reconstruction was when the proximity function
\begin{equation}
\mbox{Prox}_{T}(u):=\|Au-y\|
\end{equation}
yielded a value less than or equal to $\varepsilon=1$ for the noise-free data and $\varepsilon=70$ for the noisy data. The initial iterate for each reconstruction was the zero vector, for which $\mbox{Prox}_{C}(0)=3,497$ in the noise-free case. The specific choice made when running the superiorized version of the basic algorithm for our comparative study were  $\eta_{\ell}=0.2\times0.995^{\ell}$ and $N_{k}=10$ for each $k$. The initial size $\eta_{0}=0.2$ appeared to give the best results for the NG-TVS method when using a kernel $a=0.995$.  The choice of relaxation parameter $\lambda$ when applying ART was $\lambda=1.0$ for the noise-free data and $\lambda=0.2$ for the noisy data.

\begin{figure}
	\centering
	\def\wid{2in}
	\subfloat[]{\includegraphics[width = \wid]{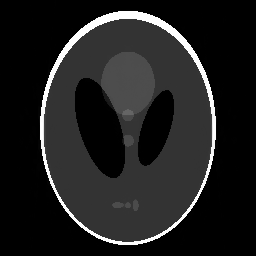}}
	\hspace{0.25in}
	\subfloat[]{\includegraphics[width = \wid]{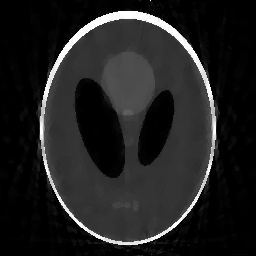}}
	
	\subfloat[]{\includegraphics[width = \wid]{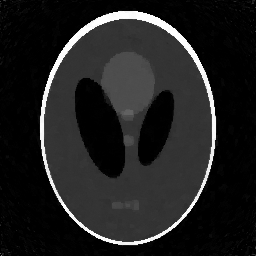}}
	\hspace{0.25in}
	\subfloat[]{\includegraphics[width = \wid]{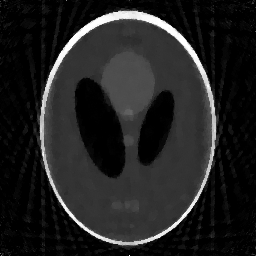}}
	
	\subfloat[]{\includegraphics[width = \wid]{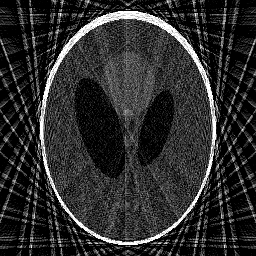}}
	\hspace{0.25in}
	\subfloat[]{\includegraphics[width = \wid]{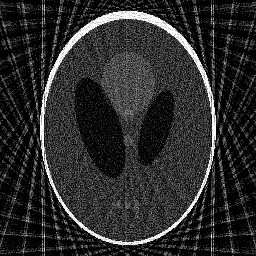}}
		
	\caption{A noise-free reconstruction with component-wise TVS method in (a) and the negative gradient TVS method in (b). A reconstruction with 2\% Gaussian noise with component-wise TVS method in (c) and the negative gradient TVS method in (d). FBP reconstructions are provided for noise-free and noisy reconstructions in (e) and (f), respectively. \label{fig: ART-TVS}}
\end{figure}

\begin{table}
	\caption{Simulated image reconstruction results. The stopping criterion differed between the noise-free and noisy data reconstructions and so results should not be compared between the two cases. Displayed values are averages of 30 trials and range is one standard deviation.\label{table: Results} }	
	\centering
	\renewcommand{\arraystretch}{1.5}
	\begin{tabular}{c|ccc|ccc}
		\multirow{2}{*}{Method} & \multicolumn{3}{c}{2\% Gaussian Noise} & \multicolumn{3}{c}{Noise-Free} 	 	
		\\ \cline{2-7} \
		& TV  & Time (s)  & Iterations & TV & {Time (s) } & Iterations  \\
		\hline
		CW-TVS & {$2032\pm11$} & {$40.0\pm0.1$} & {$106.9\pm0.6$} & {$1500\pm0$} & {$33.1\pm0.5$} & {$124\pm0$} \\
		\hline  %\noalign{\smallskip}
		NG-TVS &{$2941\pm897$} & {$38.3\pm10.3$} & {$25.0\pm7.5$} & {$1833\pm0$ } & {$143.5\pm1.2$ } & {$108\pm0$}
	\end{tabular}
\end{table}

\subsection{Computational results}

The image reconstruction results are shown in Table \ref{table: Results} and samples are visualized in Figure \ref{fig: ART-TVS}. Filtered back projection (FBP) images were also generated with AIR tools and are provided in Figure \ref{fig: ART-TVS} for reference to this traditional method using the noise-free and noisy data. Plots of TVS versus time and $\log(\|Ax^{k}-b\|)$ versus time are in shown in Figures \ref{fig: TV-plots-noise-free} and \ref{fig: Proximity-plots-noise-free}, respectively, for the noise-free data and Figures \ref{fig: TV-plots-noisy} and \ref{fig: Proximity-plots-noisy}, respectively, for the noisy data. Our computational example indicates a speedup with the CW-TVS method over the NG-TVS method in the noise-free case. As shown in Table \ref{table: Results}, for the noise-free experiment the TV output of the component-wise approach (1500) was noticeably superior to the negative gradient approach (1833), which required over 4 times more computation time. As seen in Figure \ref{fig: ART-TVS}a, the component-wise method yielded a faithful reconstruction with  negligible artifacts. The NG-TVS reconstruction with noise-free data had more blurred features (specifically around the white phantom border) and several artifacts outside the phantom. Note also that although the CW-TVS method was notably faster for the noise-free data, it required more iterations of ART than the NG-TVS method (124 versus 108). In Figure \ref{fig: TV-plots-noise-free}, we see the CW-TVS method TV function values appear to converge to the optimal TV value and at a much quicker rate than the NG-TVS method.

In the experiment with 2\% Gaussian noise added to the measurements, the component-wise approach still had superior TV output 	(2032 versus 2941). In the noisy-data case, the NG-TVS method was faster on average. For the CW-TVS reconstruction, artifacts may be seen in the corners of the image along with some blurring of edges, especially for the three small ellipses inside the phantom. The NG-TVS method displays more notable artifacts outside the phantom, faintly reminiscent of the streaks in the filtered back projection reconstructions. There are also blurring artifacts in the NG-TVS reconstruction. Lastly, in Figure \ref{fig: TV-plots-noisy}, we see the TV values increase over time.

\begin{figure*}
	\centering
	\begin{tikzpicture}
		\node[] at (0,0) {\includegraphics[width=2.5 in]{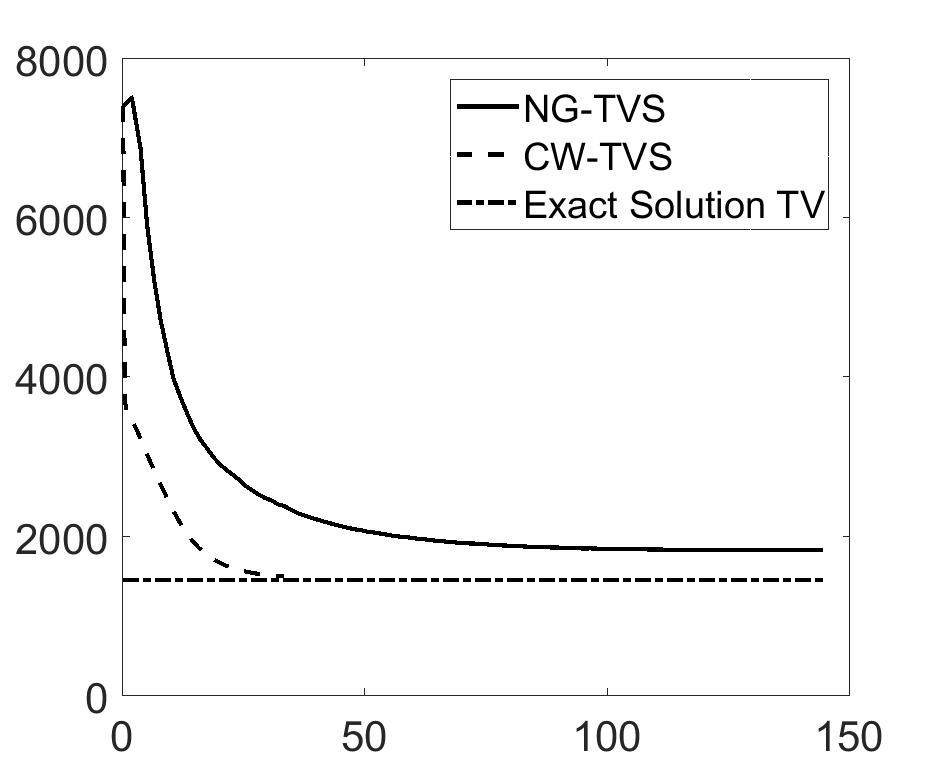}};
		\node[rotate = 90] at (-1.35 in,  0.1 in) {\large $\mbox{TV}(y^n)$};
		\node[]            at ( 0.1  in, -1.1 in) {\large Time (sec)};
	\end{tikzpicture}	
	\caption{Plots of TV from a noise-free data trial used to create the images in Subfigures \ref{fig: ART-TVS}A and \ref{fig: ART-TVS}B.
	\label{fig: TV-plots-noise-free}}
\end{figure*}

\begin{figure}
	\centering
	\begin{tikzpicture}
	\node[] at (0,0) {\includegraphics[width=2.5 in]{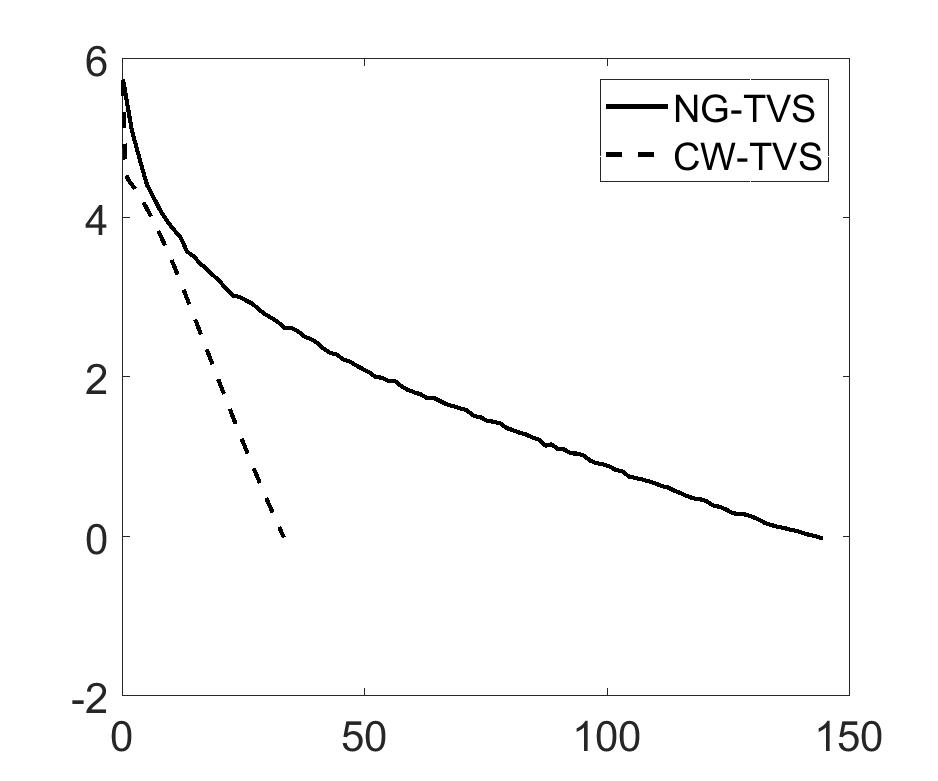}};
	\node[rotate = 90] at (-1.2 in,  0.1 in) {\large $\log(\|Ax-y^n\|)$};
	\node[]            at ( 0.1 in, -1.1 in) {\large Time (sec)};
	\end{tikzpicture}	
	\caption{Plots of proximity from a noise-free data trial used to create the images in Subfigures \ref{fig: ART-TVS}a and \ref{fig: ART-TVS}b.
	\label{fig: Proximity-plots-noise-free}}
\end{figure}

\begin{figure}
	\centering
	\begin{tikzpicture}
	\node[] at (0,0) {\includegraphics[width=2.5 in]{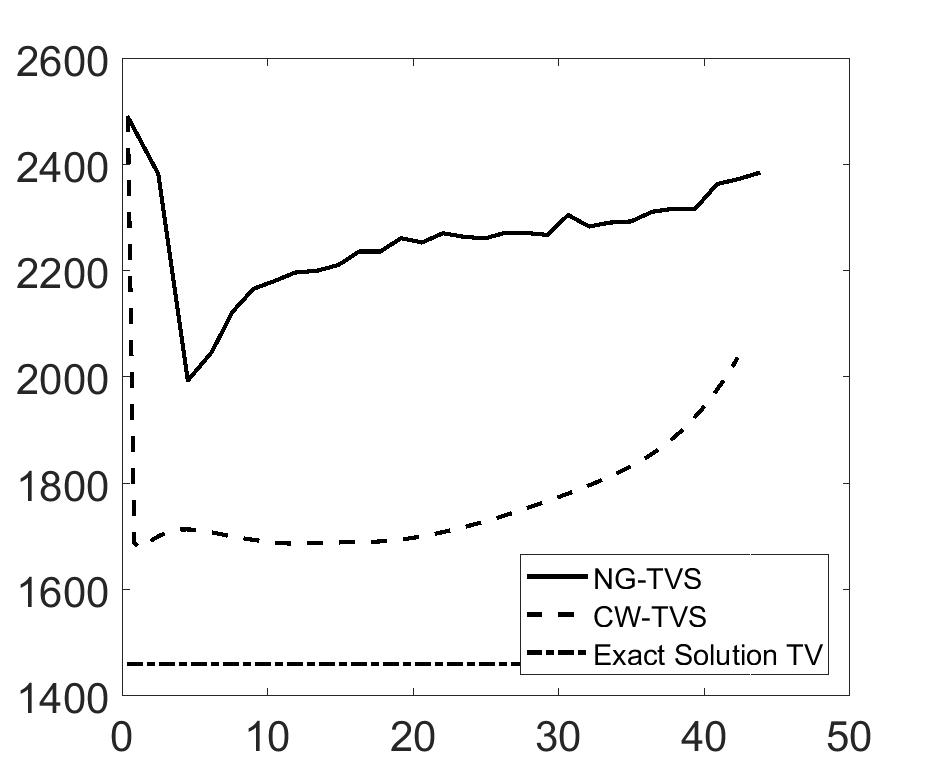}};
	\node[rotate = 90] at (-1.35 in,  0.1 in) {\large $\mbox{TV}(y^n)$};
	\node[]            at ( 0.1  in, -1.1 in) {\large Time (sec)};
	\end{tikzpicture}	
	\caption{Plots of TV from a noisy data trial used to create the images in Subfigures \ref{fig: ART-TVS}c and \ref{fig: ART-TVS}d.
	\label{fig: TV-plots-noisy}}
\end{figure}

\begin{figure}
	\centering
	\begin{tikzpicture}
	\node[] at (0,0) {\includegraphics[width=2.5 in]{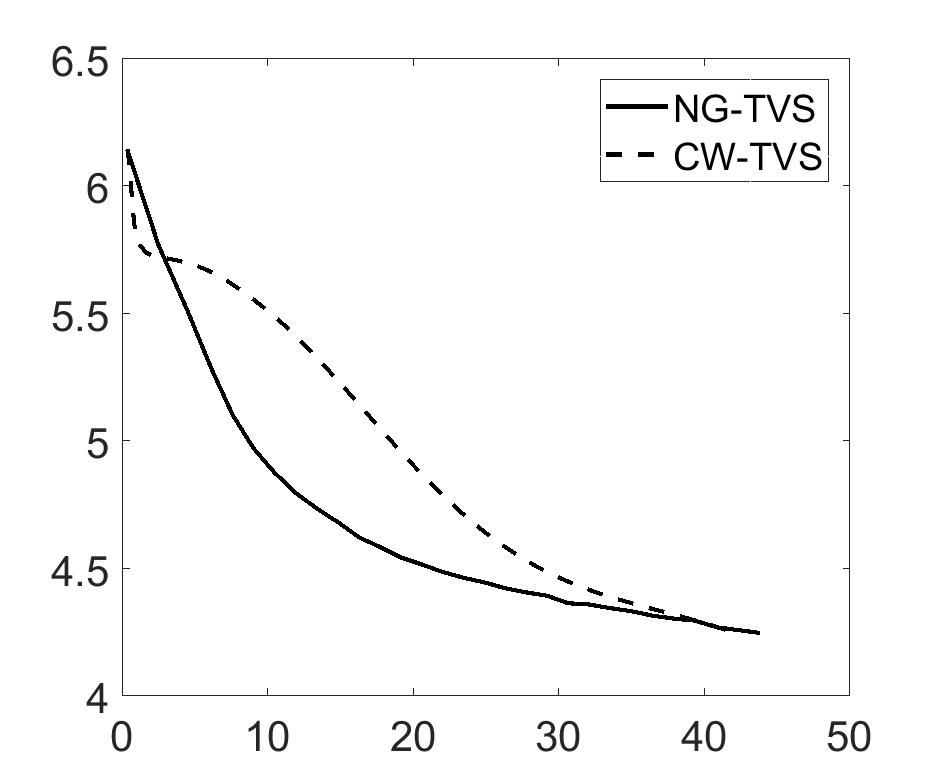}};
	\node[rotate = 90] at (-1.2 in,  0.1 in) {\large $\log(\|Ax-y^n\|)$};
	\node[]            at ( 0.1 in, -1.1 in) {\large Time (sec)};
	\end{tikzpicture}	
	\caption{Plots of proximity from a noisy data trial used to create the images in Subfigures \ref{fig: ART-TVS}c and \ref{fig: ART-TVS}d.
	\label{fig: Proximity-plots-noisy}}
\end{figure}

\begin{remark} The SM parameters such as $N_{k}$ and the number $a$ with which the parameters $\eta_\ell$ were generated, as well as the parameters associated with the feasibility-seeking ART were chosen as well as we could based on earlier published experiences and on some preliminary runs that we did with various values. The main point to observe in this regard is that they were identical in the runs with component-wise perturbations and the runs with negative subgradients as directions of nonasecent. Therefore, it is reasonable to assume that, since they were equal, they did not affect the comparative outcomes of the runs. Future methodological numerical investigations should address the choice of parameters systematically.
\end{remark}

\section{Discussion} \label{sec: Discussion}

\paragraph{\textbf {Relative computational costs}.}

Analysis of the difference in the computational cost of each TVS method is as follows. The negative gradient approach given in \ref{subsec:Target-Function-Reducing} requires the computation of up to three fractions for every component of the perturbation vector. In the denominator of each of these fractions is a square root term along with two multiplications and multiple additions/subtractions. On the other hand, the component-wise perturbation  method given in \ref{subsec:TVS-with-Coordinate-wise} requires only additions/subtractions, direct comparisons of floating point numbers with a minimum function, multiplication by the signum function, and division by 2. Hence we may expect the component-wise TVS method to be less computationally expensive per iteration. This is consistent with the results in Table \ref{table: Results}.

\paragraph{\textbf {Differences in output TV values}.}

	Inference of the difference in the output TV values from each method is as follows. With the CW-TVS method, the value of the perturbation to each pixel is bounded individually. This causes each component in a perturbation vector to be relatively equally weighted, thereby enabling a Gaussian distribution of entries. On the other hand, with NG-TVS the partial derivative is computed with respect to each pixel and then the vector is scaled as a whole. And, the partial derivative of TV at pixels along any sharp edge in the image has far greater magnitude than at the majority of pixels. This is the case with the piecewise-constant Shepp-Logan phantom. This implies that the distribution of values in the perturbation vector for NG-TVS in our reconstructions should consist mostly of near-zero values and a few  large nonzero values. Indeed, this is consistent with our observations. Also, due to these few pixels having large perturbations, it was observed in our reconstructions that the parameter $\ell$ often incremented several times between steps before perturbations had sufficiently small magnitude to reduce the TV. After several loops through the superiorization algorithm, this caused the $\eta_{\ell}$ bound to be so small that the perturbations to reduce TV became negligible. Thus, we assume that the chief advantage of the new TVS method with respect to TV value output is due to the component-wise bounding of perturbations.

\paragraph{\textbf {Connection to previous formulations of the SM}.}

Previous works (e.g., \cite{herman2012superiorization}) made note that convexity of the target function does not need to be assumed for superiorization. However, when the target function $\phi$ is not convex, there may exist a nonascent point $\eta_{\ell}d$ for which $d$ is not a nonascending vector. Additionally, if such a nonascent point is found, we do not need to be concerned whether there exists $\delta>0$ such that, for all $\lambda\in[0,\delta]$, the quantity $\lambda d$ gives a nonascent valued point. Hence the notion of a nonascending $\delta$-ball may be understood to be less restrictive and allow for a wider class of target function value reducing steps.

\section{Conclusions} \label{sec:Conclusions}

The superiorization methodology (SM) allows the conversion of a feasibility-seeking algorithm into a superiorized version of the  feasibility-seeking algorithm that, in addition to finding an $\varepsilon$-compatible solution of the constraints, steers iterates toward a reduced target function value. The superiorized version of the basic algorithm accomplishes this by interlacing target function nonascent steps into the original algorithm in an automatic fashion. This work has extended the scope of the SM by introducing the notion of nonascending $\delta$-balls for the nonascent steps. Using this notion, perturbation steps of the superiorized version of a basic algorithm can now be chosen from a wider class of target function value reducing steps, namely, functions that do not have any partial derivatives or whose partial derivatives cannot be calculated. Future investigations may also apply this formulation of the SM to problems where the target functions may not be convex (e.g., as often occurs in the field of intensity-modulated radiation therapy (IMRT) treatment planning) and to functions that are given only by tabular presentations.

We have presented an example that shows that our CW-TVS (component-wise total variation superiorization) method works well. As a byproduct we discovered that it finds a better solution than an NG-TVS (negative-gradient TVS) approach, and in less computation time in the noise-free case. Due to the limited scope of our numerical work we do not make any general claims. However, this finding is understandable in view of the simplicity of the component-wise perturbations with their averaging nature and the fact that the components of these perturbations are weighted relatively equally, which allows for a larger portion of an image to be smoothed with each perturbation than with the negative gradient approach. While the negative gradient approach directly attempts to reduce the target function of total variation, it is limited in its ability to remove artifacts from the image. We demonstrated this experimentally on a large-sized image reconstruction application that was modeled and set up as a constrained superiorization problem.
\newline
\newline
{\textbf{Acknowledgements}. We greatly appreciate the constructive comments of two anonymous reviewers which helped us improve the paper.}

% BibTeX users please use one of
%\bibliographystyle{spbasic}      % basic style, author-year citations
\bibliographystyle{spmpsci}       % mathematics and physical sciences
\bibliography{bib-der-free-sup} % name your BibTeX data base

\begin{thebibliography}{10}
\providecommand{\url}[1]{{#1}}
\providecommand{\urlprefix}{URL }
\expandafter\ifx\csname urlstyle\endcsname\relax
  \providecommand{\doi}[1]{DOI~\discretionary{}{}{}#1}\else
  \providecommand{\doi}{DOI~\discretionary{}{}{}\begingroup
  \urlstyle{rm}\Url}\fi

\bibitem{Beck2009FastTVDenoising}
Beck, A., Teboulle, M.: Fast gradient-based algorithms for constrained total
  variation image denoising and deblurring problems.
\newblock IEEE Transaction on Image Processing \textbf{18}, 2419--2434 (2009)

\bibitem{bian2010evaluation}
Bian, J., Siewerdsen, J., Han, X., Sidky, E., Prince, J., Pelizzari, C., Pan,
  X.: Evaluation of sparse-view reconstruction from flat-panel-detector
  cone-beam {C}{T}.
\newblock Physics in Medicine and Biology \textbf{55}, 6575--6599 (2010)

\bibitem{butnariu2007stable}
Butnariu, D., Davidi, R., Herman, G.T., Kazantsev, I.G.: {Stable convergence
  behavior under summable perturbations of a class of projection methods for
  convex feasibility and optimization problems}.
\newblock IEEE Journal of Selected Topics in Signal Processing \textbf{1},
  540--547 (2007)

\bibitem{sup-bib}
Censor, Y.: Superiorization and perturbation resilience of algorithms: A
  bibliography compiled and continuously updated.
\newblock http://math.haifa.ac.il/yair/bib-superiorization-censor.html. See
  also: https://arxiv.org/abs/1506.04219.

\bibitem{censor2015weakstrong}
Censor, Y.: Weak and strong superiorization: Between feasibility-seeking and
  minimization.
\newblock Analele Stiintifice ale Universitatii Ovidius Constanta-Seria
  Matematica \textbf{23}, 41--54 (2015)

\bibitem{censorLinSup}
Censor, Y.: Can linear superiorization be useful for linear optimization
  problems?
\newblock Inverse Problems \textbf{33} (2017).
\newblock 044006.

\bibitem{censor2010perturbation}
Censor, Y., Davidi, R., Herman, G.T.: Perturbation resilience and
  superiorization of iterative algorithms.
\newblock Inverse Problems \textbf{26} (2010).
\newblock 065008.

\bibitem{censor2014projectedsubgrad}
Censor, Y., Davidi, R., Herman, G.T., Schulte, R.W., Tetruashvili, L.:
  Projected subgradient minimization versus superiorization.
\newblock Journal of Optimization Theory and Applications \textbf{160},
  730--747 (2014)

\bibitem{Sup-Special-Issue-2017}
Censor, Y., Herman, G.T., Jiang, M., (Editors): Superiorization: Theory and
  applications.
\newblock Journal of Inverse Problems \textbf{33}(4) (2017).
\newblock Special Issue.

\bibitem{strictfejer}
Censor, Y., Zaslavski, A.: Strict {F}ej{\'e}r monotonicity by superiorization
  of feasibility-seeking projection methods.
\newblock Journal of Optimization Theory and Applications \textbf{165},
  172--187 (2015)

\bibitem{Chan05recentdevelopments}
Chan, T., Esedoglu, S., Park, F., Yip, A.: Total variation image restoration:
  {O}verview and recent developments.
\newblock In: Handbook of Mathematical Models in Computer Vision, pp. 17--31.
  Springer Science+Business Media, Inc. (2006)

\bibitem{combettes2002adaptive}
Combettes, P., Luo, J.: An adaptive level set method for nondifferentiable
  constrained image recovery.
\newblock IEEE Transactions on Image Processing \textbf{11}, 1295--1304 (2002)

\bibitem{combettes2004image}
Combettes, P., Pesquet, J.C.: Image restoration subject to a total variation
  constraint.
\newblock IEEE Transactions on Image Processing \textbf{13}, 1213--1222 (2004)

\bibitem{davidi2009perturbation}
Davidi, R., Herman, G.T., Censor, Y.: Perturbation-resilient block-iterative
  projection methods with application to image reconstruction from projections.
\newblock International Transactions in Operational Research \textbf{16},
  505--524 (2009)

\bibitem{defrise2011algorithm}
Defrise, M., Vanhove, C., Liu, X.: An algorithm for total variation
  regularization in high-dimensional linear problems.
\newblock Inverse Problems \textbf{27} (2011).
\newblock 065002.

\bibitem{garduno2014}
Gardu\~{n}o, E., Herman, G.T.: Superiorization of the {ML-EM} algorithm.
\newblock Transactions on Nuclear Science \textbf{61}, 162--172 (2014)

\bibitem{garduno2017}
Gardu\~{n}o, E., Herman, G.T.: Computerized tomography with total variation and
  with shearlets.
\newblock Inverse Problems \textbf{33} (2017).
\newblock 044011.

\bibitem{garduno2011}
Gardu\~{n}o, E., Herman, G.T., Davidi, R.: Reconstruction from a few
  projections by $\ell_{1}$-minimization of the {H}aar transform.
\newblock Inverse Problems \textbf{27} (2011).
\newblock 055006.

\bibitem{gibalipetra2017}
Gibali, A., Petra, S.: D{C}-programming versus $\ell_{0}$-superiorization for
  discrete tomography.
\newblock Analele Stiintifice ale Universitatii Ovidius Constanta-Seria
  Matematica  (2017).
\newblock Accepted for publication. Available on ResearchGate.

\bibitem{Hansen2012airtools}
Hansen, P.C., Saxild-Hansen, M.: {AIR} {T}ools--{A} {MATLAB} package of
  algebraic iterative reconstruction methods.
\newblock Journal of Computational and Applied Mathematics \textbf{236},
  2167--2178 (2012)

\bibitem{neto2009incremental}
Helou~Neto, E., De~Pierro, {\'A}.: Incremental subgradients for constrained
  convex optimization: {A} unified framework and new methods.
\newblock SIAM Journal on Optimization \textbf{20}, 1547--1572 (2009)

\bibitem{neto2011perturbed}
Helou~Neto, E., De~Pierro, {\'A}.: On perturbed steepest descent methods with
  inexact line search for bilevel convex optimization.
\newblock Optimization \textbf{60}, 991--1008 (2011)

\bibitem{herman2009book}
Herman, G.T.: Fundamentals of Computerized Tomography, 2nd edn.
\newblock Springer-Verlag, London, UK (2009)

\bibitem{herman2012superiorization}
Herman, G.T., Gardu\~{n}o, E., Davidi, R., Censor, Y.: Superiorization: An
  optimization heuristic for medical physics.
\newblock Medical Physics \textbf{39}, 5532--5546 (2012)

\bibitem{Marquina2008image}
Marquina, A., Osher, S.: Image super-resolution by {TV}-regularization and
  {B}regman iteration.
\newblock Journal of Scientific Computing \textbf{37}, 367--382 (2008)

\bibitem{MATLAB}
MATLAB: A high-level language and interactive environment system by
  {M}athworks.
\newblock http://www.mathworks.com/products/matlab

\bibitem{Needell2013StableImage}
Needell, D., Ward, R.: Stable image reconstruction using total variation
  minimization.
\newblock SIAM Journal on Imaging Sciences \textbf{6}, 1035--1058 (2013)

\bibitem{nikazad2012accelerated}
Nikazad, T., Davidi, R., Herman, G.T.: Accelerated perturbation-resilient
  block-iterative projection methods with application to image reconstruction.
\newblock Inverse problems \textbf{28} (2012).
\newblock 035005.

\bibitem{nurminski2010envelope}
Nurminski, E.: Envelope stepsize control for iterative algorithms based on
  {F}ejer processes with attractants.
\newblock Optimization Methods and Software \textbf{25}, 97--108 (2010)

\bibitem{rios2013}
Rios, L., Sahinidis, N.: Derivative-free optimization: a review of algorithms
  and comparison of software implementations.
\newblock Journal of Global Optimization \textbf{56}, 1247--1293 (2013)

\bibitem{rudin1992nonlinear}
Rudin, L., Osher, S., Fatemi, E.: Nonlinear total variation based noise removal
  algorithms.
\newblock Physica D: Nonlinear Phenomena \textbf{60}, 259--268 (1992)

\bibitem{shen2002mathematical}
Shen, J., Chan, T.: Mathematical models for local nontexture inpaintings.
\newblock SIAM Journal on Applied Mathematics \textbf{62}, 1019--1043 (2002)

\bibitem{sidky2011constrained}
Sidky, E., Duchin, Y., Pan, X., Ullberg, C.: A constrained, total-variation
  minimization algorithm for low-intensity x-ray {CT}.
\newblock Medical Physics \textbf{38}, S117--S125 (2011)

\bibitem{sidky2008image}
Sidky, E., Pan, X.: Image reconstruction in circular cone-beam computed
  tomography by constrained, total-variation minimization.
\newblock Physics in Medicine and Biology \textbf{53}, 4777--4807 (2008)

\bibitem{wang2008new}
Wang, Y., Yang, J., Yin, W., Zhang, Y.: A new alternating minimization
  algorithm for total variation image reconstruction.
\newblock SIAM Journal on Imaging Sciences \textbf{1}, 248--272 (2008)

\bibitem{han2013image}
Zhang, H.M., Wang, L.Y., Yan, B., Li, L., Xi, X.Q., Lu, L.Z.: Image
  reconstruction based on total-variation minimization and alternating
  direction method in linear scan computed tomography.
\newblock Chinese Physics B \textbf{22} (2013).
\newblock 078701.

\end{thebibliography}

\end{document}